      \newtheorem{theorem}{Theorem}[section]
      \newtheorem{corollary}[theorem]{Corollary}
      \newtheorem{lemma}[theorem]{Lemma}
      \newcommand{\ct}[1]{\langle {#1}\rangle \lower.3ex\hbox{$_{t}$}}
      \newcommand{\lt}[1]{[ {#1}] \lower.3ex\hbox{$_{t}$}}
\begin{document}

\title[A Maximum Problem of S.-T. Yau for Variational $P$-Capacity]{A Maximum Problem of S.-T. Yau for Variational $P$-Capacity}

\author{Jie Xiao}
\address{Department of Mathematics and Statistics, Memorial University, St. John's, NL A1C 5S7, Canada}
\email{jxiao@mun.ca}
\thanks{This project was in part supported by MUN's University Research Professorship (208227463102000) and NSERC of Canada.}

\subjclass[2010]{53A30}

\date{}


\keywords{}

\begin{abstract}
  Through using the semidiameter (in connection to: the mean radius and surface radius) of a convex closed hypersurface in $\mathbb R^{n\ge 2}$ as an sharp upper bound of the variational $(1,n)\ni p$-capacity radius, this paper settles a restriction/variant of S.-T. Yau's \cite[Problem 59]{Yau} from the surface area to the variational $p$-capacity whose limit as $p\to 1$ actually induces the surface area.
\end{abstract}
\maketitle

\tableofcontents

\section{Theorem and Its Corollary}\label{s1}
\setcounter{equation}{0}

In his problem section of Seminar on Differential Geometry published by Princeton University Press 1982, S.-T. Yau raised the following problem (cf. \cite[Page 683, Problem 59]{Yau}):

\smallskip
{\it Let $h$ be a real-valued function on $\mathbb R^3$. Find (reasonable) conditions on $h$ to insure that one can find a closed surface with prescribed genus in $\mathbb R^3$ whose mean curvature (or curvature) is given by $h$.}
\smallskip

Since posed, this problem has received a lot of attention -- see also: \cite{TrW, BaK, Yau1, HSW, Ei} for the aspect of mean curvature; \cite{O1, O2, CNS, Del, Tso1, Tso2, Tso0, ChW, W} for the aspect of Gauss curvature; \cite{GLM, GLL} and their references for the aspect of curvature measure. In this paper, we study the above problem with genus zero from the perspective of the so-called variational $p$-capacity. To be more precise, it is perhaps appropriate to review Almgren's comments on the Yau's problem (see the mid part of \cite[Page 683, Problem 59]{Yau}):
\smallskip

{\it For ``suitable" $h$ one can obtain a compact smooth submanifold $\partial A$ in $\mathbb R^3$ having mean curvature $h$ by maximizing over bounded open sets $A\subset\mathbb R^3$ the quantity
$$
F(A)=\int_A h\,d\mathcal L^3-Area(\partial A).
$$
A function $h$ would be suitable, for example, in case it were continuous, bounded, and $\mathcal L^3$ summable, and $\sup F>0$. However, the relation between $h$ and the genus of the resulting extreme $\partial A$ is not clear.}
\smallskip

Note that:
\begin{itemize}
\item $\int_{{A}}h\,d\mathcal L^3=\int_{\overline{A}}h\,d\mathcal L^3$ holds for the closure $\overline{A}$ of any bounded open set $A\subset\mathbb R^3$ with $\mathcal L^3(\partial A)=0$;

\item $Area(\partial A)=Area(\partial\overline{A})$ is just the variational $1$-capacity of $\overline{A}$ whenever $\overline{A}$ is convex body, i.e., $\overline{A}\in\mathbb K^3$ ( cf. \cite{LXZ}, \cite{GH} and \cite[Page 149]{Maz});

\item $\mathbb K^n$ comprises all elements in $\mathscr{C}^n$ (all compact and convex subsets of the Euclidean space $\mathbb R^{n\ge 2}$) with nonempty interior.
\end{itemize}
So, as a restriction/variant of the Yau problem (over $\mathscr C^n$ which is contained in the collection of the closures of all bounded open sets in $\mathbb R^n$), it seems interesting to consider the maximum problem below:
$$
\sup\left\{F_{\hbox{pcap}}(A)=\int_A h\,d\mathcal L^n-\hbox{pcap}(A):\quad A\in \mathscr C^n\right\}.
$$
In the above and below, $\hbox{pcap}(E)$ is the variational $1\le p<n$ capacity of an arbitrary set $E\subset\mathbb R^n$:
$$
\hbox{pcap}(E)=\inf_{\hbox{open}\ U\supseteq E}\hbox{pcap}(U)=\inf_{\hbox{open}\ U\supseteq E}\left(\sup_{\hbox{compact}\ K\subseteq U}\hbox{pcap}(K)\right),
$$
where for a compact set $K\subset\mathbb R^n$ one uses
$$
\hbox{pcap}(K)=\inf\left\{\int_{\mathbb R^n}|\nabla f|^p\,d\mathcal L^n:\ f\in
C_0^\infty(\mathbb R^n)\ \&\  f\ge 1_K\right\},
$$
with $d\mathcal L^n$ denoting the usual $n$-dimensional Lebesgue measure and $1_K$ being the characteristic function of $K$.

According to \cite[Page 32]{HKM}, we have 
$$
\hbox{pcap}(A)=\hbox{pcap}(\partial A)\quad\forall\quad A\in\mathbb K^n. 
$$
This, plus \cite{LXZ, GH} and \cite[Page 149]{Maz}, yields 
$$
\hbox{1cap}(A)=Area(\partial A)\equiv\mathcal{H}^{n-1}(\partial A)\quad\forall\quad A\in\mathscr C^n.
$$ 
Physically speaking, $\hbox{2cap}(A)$ of a compact set $A\subset\mathbb R^3$ expresses the total electric charge flowing into $\mathbb R^3\setminus A$ across the boundary $\partial A$ of $A$. Moreover, in accordance with Colesanti-Salani's calculation in \cite{ColS} we see that for $p\in (1,n)$ the capacity $\hbox{pcap}(A)$ of $A\in\mathbb K^n$ can be determined via
\begin{equation}\label{e11}
\hbox{pcap}(A)=\int_{\mathbb R^n\setminus A}|\nabla u_A|^p\,d\mathcal L^n=\int_{\partial A}|\nabla u_A|^{p-1}\,d\mathcal H^{n-1},
\end{equation}
where $d\mathcal H^{n-1}$ represents the $(n-1)$-dimensional Hausdorff measure on $\partial A$, $u_A$ is the so-called $(1,n)\ni p$-equilibrium potential, i.e., the
unique weak solution to the following boundary value problem:

\begin{equation}\label{e12}
\left\{
\begin{array}{lll}
\hbox{div}(|\nabla u|^{p-2}\nabla u) =0\quad\mbox{in}\quad \mathbb R^n\setminus{A};\\
\\
u=1\quad\mbox{on}\quad \partial A\quad\& \quad u(x)\to 0\quad\hbox{as}\quad |x|\to\infty,
\end{array}
\right.
\end{equation}
and the vector $\nabla u_A$ exists almost everywhere as the non-tangential limit on $\partial A$ with respect to $d\mathcal H^{n-1}$; see also Lewis-Nystr\"om's \cite[Theorems 3-4]{LewN8}.

Below is the main result of this paper.

\begin{theorem}\label{t1} Given $p\in (1,n)$, $\alpha\in (0,1)$ and a nonnegative integer $k$, let $h$ be a positive, continuous, and $L^1$-integrable function on $\mathbb R^n$.

\item{\rm(i)} There is $A_0\in\mathscr C^n$ such that $F_{\hbox{pcap}}(A_0)=\sup_{A\in\mathscr C^n}F_{\hbox{pcap}}(A)$ if and only if there exists $B_0\in \mathscr C^n$ such that ${F}_{\hbox{pcap}}(B_0)\ge 0$.

\item{\rm(ii)} Suppose $A\in \mathbb K^n$ is a maximizer of $F_{\hbox{pcap}}(\cdot)$. Then such an $A$ satisfies the variational Eikonal $p$-equation $(p-1)|\nabla u_A|^p=h$ in the sense of
\begin{equation}\label{e13}
\int_{\mathbb S^{n-1}}\phi \mathsf{g}_\ast\big((p-1)|\nabla u_A|^p\,d\mathcal{H}^{n-1}\big)=\int_{\mathbb S^{n-1}}\phi \mathsf{g}_\ast(h\,d\mathcal{H}^{n-1})\ \ \forall\ \phi\in C(\mathbb S^{n-1}),
\end{equation}
where $\mathsf{g}_\ast(X\,d\mathcal{H}^{n-1})$ is the push-forward measure of a given nonnegative measure $X\,d\mathcal{H}^{n-1}$ via the Gauss map $\mathsf{g}$ from $\partial A$ to the unit sphere $\mathbb S^{n-1}$ of $\mathbb R^n$:
$$
\mathsf{g}_\ast(X\,d\mathcal{H}^{n-1})(E)=\int_{\mathsf{g}^{-1}(E)}X\,d\mathcal{H}^{n-1}\quad\forall\quad \hbox{Borel\ set}\quad E\subset\mathbb S^{n-1},
$$
with $\mathsf{g}^{-1}$ being the inverse of the Gauss map $\mathsf{g}$. In particular, if $\partial A$ is $C^2$ strictly convex \footnote{This means that $\partial A$ is of class $C^2$ and its Gauss curvature $G(A,x)$ is positive for any $x\in\partial A$}, then $(p-1)|\nabla u_A|^p=h$ holds pointwisely on $\partial A$.

\item{\rm(iii)} If $h$ is of $C^{k,\alpha}$ and $A$, with $\partial A$ being $C^2$ strictly convex, is a maximizer of $F_{\hbox{pcap}}(\cdot)$, then $\partial A$ is of $C^{k+1,\alpha}$.
\end{theorem}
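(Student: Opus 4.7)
The plan is to treat the three assertions in turn, using the direct method of the calculus of variations for (i), a Hadamard-type first-variation identity for (ii), and a free-boundary Schauder bootstrap for (iii). For (i), the ``only if'' direction is immediate from $F_{\hbox{pcap}}(\emptyset)=0$. For the converse, take a maximizing sequence $(A_j)\subset\mathcal{C}^n$: since $\int_{A_j}h\,d\mathcal{L}^n\le\|h\|_{L^1}$ and $F_{\hbox{pcap}}(A_j)$ is bounded below, $\hbox{pcap}(A_j)$ stays bounded and the isocapacitary inequality yields a uniform bound on inradii. The $L^1$-concentration of $h$ lets us replace each $A_j$ by $A_j\cap B_R$ (still convex) with only $\varepsilon$-loss in $F_{\hbox{pcap}}$, so after a diagonal argument we may assume the sequence lies in a fixed ball. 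Blaschke's selection theorem then yields a Hausdorff-convergent subsequence $A_j\to A\in\mathcal{C}^n$; dominated convergence gives continuity of $A\mapsto\int_A h\,d\mathcal{L}^n$, while classical Hausdorff continuity of $\hbox{pcap}$ on convex bodies (via \eqref{e11} and stability of the exterior $p$-equilibrium potential) makes $F_{\hbox{pcap}}$ continuous, so $A$ is a maximizer.

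For (ii), fix a maximizer $A\in\mathbb K^n$ and, for $\phi\in C(\mathbb S^{n-1})$ with $|t|$ small, let $A_t\in\mathbb K^n$ be the convex body with support function $h_A+t\phi$. Since the outward normal speed of $\partial A_t$ at the boundary point with outer unit normal $\xi\in\mathbb S^{n-1}$ equals $\phi(\xi)$, the coarea formula gives
\begin{equation*}
\frac{d}{dt}\bigg|_{t=0}\int_{A_t}h\,d\mathcal{L}^n=\int_{\mathbb S^{n-1}}\phi\,\mathsf{g}_\ast(h\,d\mathcal{H}^{n-1}),
\end{equation*}
while differentiating \eqref{e11} along the family of exterior $p$-equilibrium potentials of $A_t$ in the manner of Colesanti-Salani \cite{ColS} yields
\begin{equation*}
\frac{d}{dt}\bigg|_{t=0}\hbox{pcap}(A_t)=(p-1)\int_{\mathbb S^{n-1}}\phi\,\mathsf{g}_\ast(|\nabla u_A|^p\,d\mathcal{H}^{n-1}).
\end{equation*}
Setting these equal for every $\phi$, by the maximality of $A$, is \eqref{e13}. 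When $\partial A$ is $C^2$ strictly convex, $\mathsf{g}$ is a $C^1$ diffeomorphism of $\partial A$ onto $\mathbb S^{n-1}$ with strictly positive Gauss curvature, both pushforwards acquire continuous densities, and the integrated identity reduces to the pointwise equality $(p-1)|\nabla u_A|^p=h$.

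For (iii), parametrise $\partial A$ through its support function $h_A\in C^2(\mathbb S^{n-1})$, whose radii-of-curvature matrix $\nabla^2 h_A+h_A\,I$ is positive-definite. Lewis-Nystr\"om's boundary regularity \cite{LewN8} gives $u_A\in C^{1,\beta}$ up to $\partial A$, so the Neumann datum $(p-1)|\nabla u_A|^p=h$ from (ii) complements the Dirichlet condition $u_A=1$ honestly. Near each boundary point apply a Kinderlehrer-Nirenberg style hodograph-Legendre transform to flatten the free boundary, converting the coupled exterior $p$-Laplace problem into a quasilinear elliptic boundary-value problem in a half-space with an oblique-derivative condition whose data inherit the $C^{k,\alpha}$ regularity of $h$. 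A Schauder bootstrap, starting from the $C^2$ regularity of $h_A$ and using the uniform ellipticity provided by strict convexity together with the strict obliqueness furnished by $|\nabla u_A|=(h/(p-1))^{1/p}>0$ on $\partial A$, raises the local parametrisation, and hence $\partial A$, from $C^2$ to $C^{k+1,\alpha}$. The main obstacle here is that the $p$-Laplacian is degenerate elliptic for $p\neq 2$ and $|\nabla u_A|$ depends on $\partial A$ nonlocally through the exterior Dirichlet problem; the hodograph-Legendre reformulation trades the nonlocal constraint for a local oblique condition on a uniformly elliptic equation near $\partial A$, and iterating the Schauder step while keeping ellipticity and obliqueness uniform is the technical heart of the argument.
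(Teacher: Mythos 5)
Your proofs of (i) and (ii) follow essentially the paper's strategy, with small tactical variants. For (i) the paper establishes compactness by using Lemma~\ref{l21} (capacity radius $\le$ semidiameter) together with a uniform lower inradius bound to force a uniform diameter bound on a maximizing sequence, then invokes Blaschke selection and continuity of $\hbox{pcap}$ and $A\mapsto\int_A h$. You instead truncate each $A_j$ by a fixed large ball $B_R$ at $\varepsilon$-cost (using $h\in L^1$ and monotonicity of $\hbox{pcap}$) and diagonalize; that works and is a clean alternative. One small imprecision: the isocapacitary inequality gives an \emph{upper} bound on $\mathcal L^n(A_j)$, hence on inradii, which is not what the selection step requires (a diameter bound is); fortunately your truncation argument supplies that independently. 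For (ii) both you and the paper compute the first variation of $F_{\hbox{pcap}}$ along Minkowski perturbations and set it to zero. You attribute the Hadamard-type formula for $\frac{d}{dt}\hbox{pcap}(A_t)$ to ``differentiating \eqref{e11} in the manner of Colesanti--Salani,'' but this step is in fact a substantive theorem (the Hadamard variational formula for $p$-capacity, \cite{CLNSXYZ}); Colesanti--Salani's \cite{ColS} is only the source of the representation \eqref{e11}. You also let $\phi$ range over all of $C(\mathbb S^{n-1})$ when defining the perturbed body with support function $h_A+t\phi$, which need not be a support function; the paper's use of $\phi=h_B$ with $B\in\mathbb K^n$, followed by density of differences of support functions, is the safer formulation. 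Neither you nor the paper addresses that the variation $t\mapsto A+tB$ is one-sided.

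Part (iii) is where you genuinely diverge. The paper's argument is short and local: from the pointwise identity $|\nabla u_A|=(h/(p-1))^{1/p}$ on $\partial A$ and Lieberman's boundary $C^{1,\hat\alpha}$ estimate \cite{Lie}, it asserts via the chain rule/implicit function theorem that the local graph function $\psi$ of $\partial A$ gains one derivative, i.e.\ $\psi\in C^{k+1,\alpha}$. You instead propose a Kinderlehrer--Nirenberg hodograph--Legendre flattening of the free boundary followed by a Schauder bootstrap of the resulting uniformly elliptic oblique-derivative problem. This is the machinery one would normally reach for in the Minkowski-problem-for-capacity literature (cf.\ Jerison \cite{Je96a}) and is arguably more robust than the paper's terse bootstrap, since the paper's step from the scalar equation on $|\nabla u_A|$ to $C^{k,\alpha}$ regularity of the full vector $\nabla u_A|_{\partial A}$ is not spelled out. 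However, you stop exactly at the place you yourself identify as ``the technical heart'': you do not show that the hodograph--Legendre change of variables applies to the degenerate $p$-Laplacian (it is only uniformly elliptic in a neighborhood of $\partial A$ because $|\nabla u_A|=(h/(p-1))^{1/p}>0$ there), nor that the Schauder step can be iterated while keeping ellipticity and obliqueness constants uniform. So for (iii) what you have is a plausible and genuinely different route, sketched at the level of a plan rather than a proof; the paper's own argument, while shorter, is also not fully detailed, so the comparison is between two incomplete routes rather than a complete one and a gap.
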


Theorem \ref{t1} can actually give much more information than just a generalized solution to the above Yau problem for $\hbox{pcap}(\cdot)$ over $\mathscr C^n$. To see this, recall two related facts. The first is:
\begin{equation}\label{e14}
\hbox{div}(|\nabla u|^{p-2}\nabla u)=|u_\nu|^{p-2}\Big((n-1)Hu_\nu+(p-1)u_{\nu\nu}\Big),
\end{equation}
where $\nu$, $u_\nu$, $u_{\nu\nu}$, and $H$ denote the outer unit normal vector, the first-order derivative along $\nu$, the second-order derivative along $\nu$, and the mean curvature of the level surface of $u$ respectively, and so,
$$
\hbox{div}(|\nabla u|^{-1}\nabla u)=\big((n-1)H\big)\left(\frac{u_\nu}{|u_\nu|}\right)
$$
holds at least weakly. The second is Maz'ya's isocapacitary inequality for $p\in (1,n)$ (cf. \cite{Maz1}):
\begin{equation}\label{e15}
\left(\frac{\mathcal{L}^n(E)}{\omega_n}\right)^\frac1n\le \left(\Big(\frac{p-1}{n-p}\Big)^{p-1}\Big(\frac{\hbox{pcap}(E)}{\sigma_{n-1}}\Big)\right)^\frac{1}{n-p}\quad\forall\quad E\subset\mathbb R^n
\end{equation}
and Federer's isoperimetric inequality (cf. \cite[\S 3.2.43]{Fed}):
\begin{equation}\label{e15e}
\left(\frac{\mathcal{L}^n(E)}{\omega_n}\right)^\frac1n\le \left(\frac{\mathcal{H}^{n-1}(\partial E)}{\sigma_{n-1}}\right)^\frac{1}{n-1}\quad\forall\quad E\in\mathbb K^n.
\end{equation}
Here and henceforth, $\omega_n$ and $\sigma_{n-1}=n\omega_n$ stand for the volume and the surface area of the unit ball of $\mathbb R^n$ respectively. Of course, the equality in (\ref{e15})/(\ref{e15e}) holds as $A$ is a ball. Moreover, the left hand side of (\ref{e15})/(\ref{e15e}) is called the volume radius of $E$, and the right hand sides of (\ref{e15}) and (\ref{e15e}) are called the variational $p$-capacity radius and the surface radius respectively. 

Now, our issue is as follows - the treatment of Theorem \ref{t1} brings not only Corollary \ref{c1} - a generalized solution to a special case (i.e., genus = $0$) of the original Yau problem over $\mathscr C^n$, but also a new analytic approach to some related geometric problems (see e.g. Massari's papers: \cite{M1, M2}).

\begin{corollary}\label{c1} Let $h\in L^1(\mathbb R^n)$ be positive and continuous, $k$ be a nonnegative integer, $\alpha\in (0,1)$, and 
$$
F_{\mathcal H^{n-1}}(A)=\int_A h\,d\mathcal L^n-\mathcal{H}^{n-1}(A)\quad\forall\quad A\in \mathscr C^n.
$$

\item{\rm(i)} There is $A_0\in\mathscr C^n$ such that $F_{\mathcal H^{n-1}}(A_0)=\sup_{A\in\mathscr C^n}F_{\mathcal H^{n-1}}(A)$ if and only if there exists $B_0\in \mathscr C^n$ such that $F_{\mathcal H^{n-1}}(B_0)\ge 0$.

\item{\rm(ii)} Suppose $A\in \mathbb K^n$ is a maximizer of $F_{\mathcal H^{n-1}}(\cdot)$. Then there is a Borel measure $\mu_{{\mathcal H^{n-1}},A}$ on $\mathbb S^{n-1}$ such that $d\mu_{{\mathcal H^{n-1}},A}=\mathsf{g}_\ast(h\,d\mathcal{H}^{n-1})$, namely,

\begin{equation}\label{e16}
    \int_{\mathbb S^{n-1}}\phi\,d\mu_{{\mathcal H^{n-1}},A}=\int_{\mathbb S^{n-1}}\phi \mathsf{g}_\ast(h\,d\mathcal{H}^{n-1})\ \ \forall\ \phi\in C(\mathbb S^{n-1}).
\end{equation}
In particular, if $\partial A$ is $C^2$ strictly convex, then such a maximizer $A$ satisfies the equation $h(\cdot)=H(\partial A,\cdot)$ - the mean curvature of $\partial A$.

\item{\rm(iii)} If $h$ is of $C^{k,\alpha}$ and $A$, with $\partial A$ being $C^2$ strictly convex, is a maximizer of $F_{\mathcal H^{n-1}}(\cdot)$, then $\partial A$ is of $C^{k+2,\alpha}$.
\end{corollary}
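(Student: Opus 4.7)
The plan is to run a variational argument for $F_{\mathcal{H}^{n-1}}$ that parallels the proof of Theorem \ref{t1} for $F_{\operatorname{pcap}}$, replacing the Colesanti--Salani first-variation formula for the $p$-capacity that underlies (\ref{e13}) by the classical first variation of surface area under Minkowski perturbations. Equivalently, one could pass $p\to 1^+$ in Theorem \ref{t1} and invoke the limit $\lim_{p\to 1^+}\operatorname{pcap}(A)=\mathcal{H}^{n-1}(\partial A)$ for $A\in\mathbb{K}^n$ together with the formal identity $\lim_{p\to 1^+}\operatorname{div}(|\nabla u|^{p-2}\nabla u)=(n-1)H\,\operatorname{sgn}(u_\nu)$ noted after (\ref{e14}); I take the direct route, since it avoids the delicate degeneration of the $p$-equilibrium potential as $p\to 1$.

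For part (i), the implication $(\Rightarrow)$ is immediate because $F_{\mathcal{H}^{n-1}}(\emptyset)=0$. For $(\Leftarrow)$, given $F_{\mathcal{H}^{n-1}}(A_0)\ge 0$, a maximizing sequence $(A_j)$ satisfies $\mathcal{H}^{n-1}(\partial A_j)\le \|h\|_{L^1(\mathbb{R}^n)}$; the Federer isoperimetric inequality (\ref{e15e}) then bounds $\mathcal{L}^n(A_j)$, and since the surface-area bound itself controls the diameter of each $A_j\in\mathcal{C}^n$, translating to keep centroids at the origin and applying the Blaschke selection theorem extracts a Hausdorff-convergent subsequence. The limit $A_\infty\in\mathcal{C}^n$ is a maximizer because both $A\mapsto\int_A h\,d\mathcal{L}^n$ and $A\mapsto\mathcal{H}^{n-1}(\partial A)$ are continuous on $\mathcal{C}^n$ under Hausdorff convergence.

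For part (ii), I would test a maximizer $A\in\mathbb{K}^n$ against the Minkowski perturbations $A_t=A+tK$ with $K\in\mathbb{K}^n$ arbitrary. The classical first variation of surface area reads
$$
\left.\tfrac{d}{dt}\right|_{t=0^+}\mathcal{H}^{n-1}(\partial A_t)=\int_{\mathbb{S}^{n-1}} h_K\,dS(A,\cdot),
$$
where $h_K$ is the support function of $K$ and $S(A,\cdot)=\mathsf{g}_\ast(d\mathcal{H}^{n-1}|_{\partial A})$ is the surface-area measure of $A$; a change of variables through the Gauss map and the continuity of $h$ show that the first variation of the volume term equals $\int_{\mathbb{S}^{n-1}} h_K\,\mathsf{g}_\ast(h\,d\mathcal{H}^{n-1})$. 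Stationarity at $A$, together with the density of differences of support functions in $C(\mathbb{S}^{n-1})$, then yields (\ref{e16}) with $\mu_{\mathcal{H}^{n-1},A}=S(A,\cdot)$. When $\partial A$ is $C^2$ strictly convex, $S(A,\cdot)$ has continuous density equal to the reciprocal Gauss curvature with respect to surface measure on $\mathbb{S}^{n-1}$, so equating densities and pulling back through $\mathsf{g}^{-1}$ recovers the pointwise identity $h=H$ on $\partial A$ in the convention that $H$ is the trace of the Weingarten map.

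For part (iii), I would recast $h=H$ as an equation on $\mathbb{S}^{n-1}$ for the support function $h_A$: the principal radii of $\partial A$ at the boundary point with outer normal $u\in\mathbb{S}^{n-1}$ are the eigenvalues of the spherical Hessian $\nabla^2_{\mathbb{S}^{n-1}}h_A+h_A\operatorname{id}$ at $u$, so $H$ is a smooth concave uniformly elliptic fully nonlinear operator in $(h_A,\nabla_{\mathbb{S}^{n-1}}h_A,\nabla^2_{\mathbb{S}^{n-1}}h_A)$ under the strict convexity hypothesis. Evans--Krylov interior $C^{2,\alpha}$ estimates followed by a Schauder bootstrap promote $h\in C^{k,\alpha}$ to $h_A\in C^{k+2,\alpha}(\mathbb{S}^{n-1})$, which is equivalent to $\partial A$ being of class $C^{k+2,\alpha}$. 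The main obstacle is the first-variation calculation in (ii) without a priori regularity of $\partial A$; this is precisely where working with Minkowski sums and surface-area measures (rather than with normal deformations of $\partial A$) is essential.
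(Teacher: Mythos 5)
Your plan for (ii) breaks down at the first-variation formula. You write
$\left.\tfrac{d}{dt}\right|_{t=0^+}\mathcal{H}^{n-1}(\partial A_t)=\int_{\mathbb{S}^{n-1}} h_K\,dS(A,\cdot)$
with $S(A,\cdot)=\mathsf{g}_\ast\big(d\mathcal{H}^{n-1}\big)$, but that is the first variation of \emph{volume}, not of surface area. Since $\mathcal{H}^{n-1}(\partial A)=nV(A[n-1],B^n)$, differentiating the multilinear expansion of $V\big((A+tK)[n-1],B^n\big)$ produces $n(n-1)V(A[n-2],K,B^n)=(n-1)\int_{\mathbb{S}^{n-1}}h_K\,dS_{n-2}(A,\cdot)$, so the measure that must match $\mathsf{g}_\ast(h\,d\mathcal{H}^{n-1})$ at a critical point is (a multiple of) the mixed area measure $S_{n-2}(A,\cdot)$, not the top-order surface-area measure $S_{n-1}(A,\cdot)=S(A,\cdot)$. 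Your version would force $S(A,\cdot)=\mathsf{g}_\ast(h\,d\mathcal{H}^{n-1})$, whose densities with respect to spherical Lebesgue measure are $1/G$ and $h/G$ respectively, giving $h\equiv 1$ rather than $h=H$; the passage in your last sentence to $h=H$ is a non sequitur. What actually closes the argument is that for $C^2$ strictly convex bodies $dS_{n-2}(A,\cdot)=\mathsf{g}_\ast\big(H(\partial A,\cdot)\,d\mathcal{H}^{n-1}\big)$, which is the variational formula the paper invokes (from \cite{CFG,CG,Col,Col1}); the paper's curvature measure $\mu_{\mathcal{H}^{n-1},A}$ is precisely this lower-order area measure, normalized via the Minkowski identity $\mathcal{H}^{n-1}(\partial A)=(n-1)^{-1}\int h_A\,d\mu_{\mathcal{H}^{n-1},A}$.

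There is also a gap in (i): the assertion that ``the surface-area bound itself controls the diameter'' is false for $n\ge 3$ -- a thin needle $[0,L]\times[0,\varepsilon]^{n-1}$ has $\mathcal{H}^{n-1}(\partial\cdot)\to 0$ as $\varepsilon\to 0$ with $L$ fixed. The paper avoids this by a dichotomy: either the maximizing sequence degenerates to a point (in which case a singleton is a maximizer), or the inradius is uniformly bounded below, and then an inradius lower bound together with unbounded diameter would force unbounded volume, contradicting the a priori bound $F_{\mathcal{H}^{n-1}}(A_j)\le\|h\|_{L^1}-\sigma_{n-1}\big(\mathcal{L}^n(A_j)/\omega_n\big)^{(n-1)/n}$ coming from the isoperimetric inequality. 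You need some such argument before invoking Blaschke selection. For (iii), your route via the support function (writing $H$ as a convex uniformly elliptic operator in $\nabla^2_{\mathbb{S}^{n-1}}h_A+h_A\,\mathrm{id}$ and applying Evans--Krylov plus Schauder) is a legitimate alternative to the paper's argument, which instead uses the oriented distance function $b_A$, the identity $(n-1)H=\Delta b_A$ on $\partial A$, and Delfour--Zol\'esio's regularity transfer; note only that $\operatorname{tr}(W^{-1})$ is \emph{convex}, not concave, in $W$, though either sign suffices for Evans--Krylov.
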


\section{Three Lemmas and Their Proofs}\label{s2}
\setcounter{equation}{0}

 In order to prove Theorem \ref{t1} and Corollary \ref{c1}, we will not only keep in mind the iso-capacitary/isoperimetric inequality (\ref{e15})/(\ref{e15e}) which shows that the volume radius serves as a sharp lower bound of the variational $p$-capacity radius and the surface radius, but also explore the optimal upper bounds of these two geometric quantities in terms of the semidiameter and the mean radius; see the coming-up next three lemmas. In short, under certain conditions on $A$ and its boundary $\partial A$ we will build up the following decisive radius tree for $p\in (1,n)$:
 $$
 \left(\frac{\mathcal L^n(A)}{\omega_n}\right)^\frac1n
 \le
 \begin{cases}
 \left(\frac{\hbox{pcap}(A)}{\big(\frac{p-1}{n-p}\big)^{1-p}\sigma_{n-1}}\right)^\frac{1}{n-p}\\
 \left(\frac{\mathcal{H}^{n-1}(\partial A)}{\sigma_{n-1}}\right)^\frac1{n-1}
 \end{cases}
\le
 \begin{cases}
 \frac{\hbox{diam}(A)}{2}\\
 \frac{\hbox{b}(A)}{2},
 \end{cases}
$$
and surprisingly find that if all principal curvatures of a given $C^2$ boundary $\partial A$ is in the interval $[\alpha,\beta]\subset (0,\infty)$ then
$$
\left(\frac{p-1}{(n-p)\beta}\right)^{p-1}\left(\frac{\hbox{pcap}(A)}{\sigma_{n-1}}\right)\le \frac{\mathcal{H}^{n-1}(\partial A)}{\sigma_{n-1}}\le\left(\frac{p-1}{(n-p)\alpha}\right)^{p-1}\left(\frac{\hbox{pcap}(A)}{\sigma_{n-1}}\right).
$$

\subsection{Semidiameter}\label{s21}
The isodiameter or Bieberbach's inequality (cf. \cite[Page 69]{EvaG} and \cite[Page 318]{Schn}) says that the semidiameter $2^{-1}\hbox{diam}(A)$ of $A\subset\mathbb R^n$ dominates the volume radius of $A$:
\begin{equation}\label{e21}
\left(\frac{\mathcal{L}^n(A)}{\omega_n}\right)^\frac1n\le \frac{\hbox{diam}(A)}{2},
\end{equation}
with equality when $A$ is a ball. Interestingly, (\ref{e21}) has been improved through the foregoing (\ref{e15})/(\ref{e15e}) and the following (\ref{e23})/(\ref{e22}).

\begin{lemma}\label{l21}

\item\rm{(i)} If $p\in (1,n)$ and $A\subset\mathbb R^n$ is a connected compact set, then 
\begin{equation}\label{e23}
\left(\Big(\frac{p-1}{n-p}\Big)^{p-1}\Big(\frac{\hbox{pcap}(A)}{\sigma_{n-1}}\Big)\right)^\frac{1}{n-p}\le \frac{\hbox{diam}(A)}{2},
\end{equation}
with equality when $A$ is a ball. 

\item\rm{(ii)} If $A\in\mathbb K^n$, then 

\begin{equation}\label{e22}
\left(\frac{\mathcal{H}^{n-1}(\partial A)}{\sigma_{n-1}}\right)^\frac1{n-1}\le \frac{\hbox{diam}(A)}{2},
\end{equation}
with equality when $A$ is a ball.

\end{lemma}

\begin{proof} Obviously, equalities in (\ref{e23}) and (\ref{e22}) occur when $A$ is a ball. Note that (\ref{e22}) is the well-known Kubota inequality (cf. \cite{Kub, MagPP}). So, it suffices to prove the remaining part of (\ref{e23}). To do so, suppose
$$
\begin{cases}
\hbox{dist}(x,A)=\inf_{y\in A}|x-y|;\\
{r\mathbb B^n}=\{x\in\mathbb R^n: |x|<r\}\quad\forall\quad r>0;\\
\overline{\mathbb R^n}=\mathbb R^n\cup\{\infty\};\\
S(A,t)=\mathcal{H}^{n-1}\big(\{x\in r\mathbb B^n\setminus A: \hbox{dist}(x,A)=t\}\big)\quad\forall\quad t>0.
\end{cases}
$$
The flat case of Gehring's Theorem 2 in \cite{Ge2} implies that if
$$
A\subset r\mathbb B^n\quad\&\quad \tau=\liminf_{x\to \overline{\mathbb R^n}\setminus r\mathbb B^n}\hbox{dist}(x,A),
$$
then
\begin{equation}\label{e25}
\hbox{pcap}(A,r\mathbb B^n)\le\left(\int_0^\tau\big(S(A,t)\big)^\frac{1}{1-p}\,dt\right)^{1-p},
\end{equation}
where
$$
\hbox{pcap}(A,r\mathbb B^n)=\inf_u\int_{r\mathbb B^n\setminus A}|\nabla u|^p\,d\mathcal L^n
$$
for which the infimum ranges over all functions $u$ that are continuous in $\overline{\mathbb R^n}$ and absolutely continuous in the sense of Tonelli in $\mathbb R^n$ with $u=0$ in $A$ and $u=1$ in $\overline{\mathbb R^n}\setminus r\mathbb B^n$.

Noting such an essential fact that if $\hat{A}$ is the convex hull of $A$ then
$$
\hbox{pcap}(A)\le\hbox{pcap}(\hat{A})\quad\&\quad\hbox{diam}(A)=\hbox{diam}(\hat{A}),
$$
without loss of generality we may assume that $A$ is convex, and then restate Kubota's inequality (cf. \cite{Kub, Gri}) for such an $A$:
$$
\frac{\mathcal{H}^{n-1}(\partial A)}{\sigma_{n-1}}\le\left(\frac{\hbox{diam}(A)}{2}\right)^{n-1}.
$$
This in turn implies
$$
\frac{S(A,t)}{\sigma_{n-1}}\le\left(\frac{\hbox{diam}(A)+2t}{2}\right)^{n-1}.
$$
So, the last inequality, along with (\ref{e25}), gives
\begin{eqnarray*}
&&\frac{\hbox{pcap}(A,r\mathbb B^n)}{\sigma_{n-1}}\\
&&\le\left(\int_0^\tau \left(\frac{\hbox{diam}(A)+2t}{2}\right)^\frac{n-1}{1-p}\,dt\right)^{1-p}\\
&&=\left(\Big(\frac{1-p}{n-p}\Big)\left(\Big(\frac{\hbox{diam}(A)}{2}+\tau\Big)^\frac{n-p}{1-p}-\Big(\frac{\hbox{diam}(A)}{2}\Big)^\frac{n-p}{1-p}\right)\right)^{1-p}\\
&&\to\left(\Big(\frac{p-1}{n-p}\Big)\Big(\frac{\hbox{diam}(A)}{2}\Big)^\frac{n-p}{1-p}\right)^{1-p}\quad\hbox{as}
\quad\tau\to\infty.
\end{eqnarray*}
As a result, we get
$$
\frac{\hbox{pcap}(A)}{\sigma_{n-1}}=\lim_{r\to\infty}\frac{\hbox{pcap}(A,r\mathbb B^n)}{\sigma_{n-1}}\le\Big(\frac{p-1}{n-p}\Big)^{1-p}\left(\frac{\hbox{diam}(A)}{2}\right)^{n-p}
$$
whence reaching the inequality of (\ref{e23}).

\end{proof}

\subsection{Mean radius}\label{s22} For $A\in\mathbb K^n$, denote by (cf. \cite[1.7]{Schn})
$$
 h_A(x)=\sup_{y\in A}x\cdot y\quad\&\quad \hbox{b}(A)=\frac{2}{\sigma_{n-1}}\int_{\mathbb S^{n-1}}h_A\,d\theta
$$
the support function and the mean width of $A$ (with $d\theta$ being the standard area measure on $\mathbb S^{n-1}$) respectively, and then write $\hbox{b}(A)/2$ for the mean radius of $A$ according to \cite{Pol}. Clearly, 
$$
\frac{\hbox{b}(A)}{2}\le\frac{\hbox{diam}(A)}{2},
$$
with equality when $A$ is a ball. Interestingly, the Uryasohn inequality (cf. \cite[(6.25)]{Schn})
\begin{equation}\label{e212e}
\left(\frac{\mathcal{L}^n(A)}{\omega_n}\right)^\frac1n\le \frac{\hbox{b}(A)}{2}
\end{equation}
holds with equality if $A$ is a ball. Even more interestingly, the forthcoming lemma reveals that (\ref{e212e}) can be further improved.

\begin{lemma}\label{l22new}

\item\rm{(i)} If $p\in (1,n)$ and $A\in\mathbb K^n$, then 
\begin{equation}\label{e24}
\left(\Big(\frac{p-1}{n-p}\Big)^{p-1}\Big(\frac{\hbox{pcap}(A)}{\sigma_{n-1}}\Big)\right)^\frac{1}{n-p}\le \frac{\hbox{b}(A)}{2},
\end{equation}
with equality when $A$ is a ball.

\item\rm{(ii)} If $A\in\mathbb K^n$, then 

\begin{equation}\label{e24a}
\left(\frac{\mathcal{H}^{n-1}(\partial A)}{\sigma_{n-1}}\right)^\frac1{n-1}\le \frac{\hbox{b}(A)}{2},
\end{equation}
with equality when $A$ is a ball.
\end{lemma}

\begin{proof} Since (\ref{e24a}) can be seen from Chakerian's \cite[(25)]{Cha}, it is enough to verify (\ref{e24}). Note that
\begin{equation}\label{e26}
\frac{|x|\hbox{b}(A)}{2}=\frac{1}{\sigma_{n-1}}\int_{\mathbb S^{n-1}}h_A(|x|\theta)\,d\theta.
\end{equation}
is valid for any given $x\in\mathbb R^n$, and importantly, an extension of \cite[Example 7.4]{Bor} to $A\in\mathbb K^n$ tells us that the right side of (\ref{e26}) can be approximated by
$\sum_{j=1}^m h_A(|x|\theta_j)\lambda_j$ which is the support function of $\sum_{j=1}^m \lambda_j R_j(A)$, where
$$
\begin{cases}
\lambda_j\in (0,1);\\
\sum_{j=1}^m \lambda_j=1;\\
R_j(A)\ \ \hbox{is\ an\ appropriate\ rotation\ of}\ A\ \hbox{associated\ to}\ \theta_j.
\end{cases}
$$ 
Therefore, by employing Colesanti-Salani's \cite[Theorem 1]{ColS} and by induction, we can readily obtain that if $p\in (1,n)$ then
\begin{align}\label{e27}
\big(\hbox{pcap}(A)\big)^\frac1{n-p}&=\sum_{j=1}^m\lambda_j\big(\hbox{pcap}(A)\big)^\frac1{n-p}\nonumber\\
&=\sum_{j=1}^m\lambda_j\Big(\hbox{pcap}\big(R_j(A)\big)\Big)^\frac1{n-p}\\
&\le\left(\hbox{pcap}\Big(\sum_{j=1}^m \lambda_j R_j(A)\Big)\right)^\frac1{n-p}.\nonumber
\end{align}
Here the rotation-invariance of $\hbox{pcap}(\cdot)$ has been used; see e.g. \cite[Page 151]{EvaG}. Note also that the left side of (\ref{e26}) is the support function of a ball of radius ${\hbox{b}(A)}/{2}$. So, a combination of the above approximation, the correspondence between a support function and a convex set, (\ref{e27}) and the well-known formula
\begin{equation}
\label{ballcap}
\hbox{pcap}(r\mathbb B^n)=\sigma_{n-1}\Big(\frac{p-1}{n-p}\Big)^{1-p}r^{n-p},
\end{equation}
derives the left inequality of (\ref{e24}).
\end{proof}

\subsection{Variational capacity radius vs surface radius}\label{s23}

 We should point out that if $p=n-1=2$ then (\ref{e24}) is just P\'olya's inequality \cite[(5)]{Pol} -- here the fact that for a $C^2$ body $A\in\mathbb K^3$ the mean radius $\hbox{b}(A)/2$ is equal to $(4\pi)^{-1}$ times the surface integral of the mean curvature has been used. To see this more transparently, let us recall that for a convex set $A$ with its boundary $\partial A$ being $C^2$ hypersurface,
$$
m_j(A,x)=\begin{cases} & 1\quad\hbox{for}\quad j=0;\\
&{\Big(\begin{array}{c} n-1\\ j\end{array}\Big)^{-1}} {\sum_{1\le i_1<...<i_j\le n-1}\kappa_{i_1}(x)\cdots\kappa_{i_j}(x)}\ \ \hbox{for}\ \ j=1,...,n-1,
\end{cases}
$$
is the $j$-th mean curvature at $x\in\partial A$, where $\kappa_1(x),...,\kappa_{n-1}(x)$ are the principal curvatures of $\partial A$ at the point $x$. Note that (see, e.g. \cite{BM, FS})
$$
\begin{cases}
& m_1(A,x)=H(\partial A,x)=\hbox{mean\ curvature\ of}\ \partial A\ \hbox{at}\ x;\\
& m_j(A,x)\le \big(H(\partial A,x)\big)^j\quad\hbox{for}\quad j=1,...,n-1;\\
& m_{n-1}(A,x)=G(\partial A,x)=\hbox{Gauss\ curvature\ of}\ \partial A\ \hbox{at}\ x.
\end{cases}
$$
Such a higher order mean curvature $m_j(A,\cdot)$ is used to produce the so-called $j$-th integral mean curvature of $\partial A$:
$$
M_j(A)=\int_{\partial A}m_j(A,\cdot)\,d\mathcal{H}^{n-1}(\cdot).
$$
Clearly, we have
$$
\begin{cases}
M_0=\mathcal{H}^{n-1}(\partial A);\\
M_1=\int_{\partial A}H(\partial A,\cdot)\,d\mathcal{H}^{n-1}(\cdot);\\
M_{n-2}=\sigma_{n-1}\hbox{b}(K)/2.
\end{cases}
$$
Moreover, if $\nu(x)$ is the outer unit normal vector then (cf. \cite{MR})
$$
M_0=\int_{\partial A}x\cdot\nu(x) H(\partial A,x)\,d\mathcal{H}^{n-1}(x);
$$
if $n=2$ then the Gauss-Bonnet formula gives $M_1(A)=2\pi$; and if $p=n-1=2$ then (\ref{e24}) reduces to the above-mentioned P\'olya's inequality.

According to \cite[(13.43)]{San}, the foregoing $S(A,t)$ has the following decomposition
$$
S(A,t)=\sum_{j=0}^{n-1}\Big(\begin{array}{c} n-1\\ j\end{array}\Big)M_{j}(A)t^j.
$$
This formula is brought into (\ref{e25}) to deduce
\begin{equation}\label{e28}
\hbox{pcap}(A)\le\left(\int_0^\infty\Big(\int_{\partial A}\big(1+tH(\partial A,\cdot)\big)^{n-1}\,d\mathcal{H}^{n-1}(\cdot)\Big)^\frac{1}{1-p}\,dt\right)^{1-p}
\end{equation}
with equality if $A$ is a ball. Moreover, if there is a constant $\beta>0$ such that $0\le H(\partial A,\cdot)\le\beta$ then (\ref{e28}) is used to derive 
\begin{align*}
&{\hbox{pcap}(A)}\\
&\le\left(\int_0^\infty\Big(\int_{\partial A}\big(1+t\beta\big)^{n-1}\,d\mathcal{H}^{n-1}(\cdot)\Big)^\frac{1}{1-p}\,dt\right)^{1-p}\\
&=\left(\frac{\beta(n-p)}{p-1}\right)^{p-1}\mathcal{H}^{n-1}(\partial A).
\end{align*}
This last estimate can be strengthened through the forthcoming radius-comparison result which partially supports the well-known P\'olya-Szeg\"o conjecture \cite{PS, Pol}: 
\smallskip

{\it Of all convex and compact sets in $\mathbb K^3$, with a given surface area, the planar disk has the minimal electrostatic capacity $2cap(\cdot)$. }
\smallskip

\begin{lemma}\label{l2new} 
\label{l24} Let $p\in (1,n)$.

\item\rm{(i)} If there is a constant $\alpha>0$ such that $A\subset\mathbb R^n$ is $\alpha$-convex, i.e., for any $x\in\partial A$ there exists a closed ball $B$ with radius $\alpha^{-1}$ such that $x\in\partial B$ and $A\subseteq B$, then
\begin{equation}\label{e24e}
\big(\alpha^{-1}\big)^\frac{p-1}{n-1}\left(\left(\Big(\frac{p-1}{n-p}\Big)^{p-1}\Big(\frac{\hbox{pcap}(A)}{\sigma_{n-1}}\Big)\right)^\frac{1}{n-p}\right)^\frac{n-p}{n-1}\ge \left(\frac{\mathcal{H}^{n-1}(\partial A)}{\sigma_{n-1}}\right)^\frac1{n-1},
\end{equation}
with equality when and only when $A$ is a ball of radius $\alpha^{-1}$.

\item\rm{(ii)} If $A\subset \mathbb R^n$ is a connected compact set with $C^2$ boundary $\partial A$ and there is a constant $\beta>0$ such that $0\le H(\partial A,\cdot)\le\beta$, then 

\begin{equation}\label{e24ee}
\big(\beta^{-1}\big)^\frac{p-1}{n-1}\left(\left(\Big(\frac{p-1}{n-p}\Big)^{p-1}\Big(\frac{\hbox{pcap}(A)}{\sigma_{n-1}}\Big)\right)^\frac{1}{n-p}\right)^\frac{n-p}{n-1}\le \left(\frac{\mathcal{H}^{n-1}(\partial A)}{\sigma_{n-1}}\right)^\frac1{n-1},
\end{equation}
with equality when and only when $A$ is a ball of radius $\beta^{-1}$.

\end{lemma}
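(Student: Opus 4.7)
The plan is to attack the two inequalities by separate routes: a Hopf-type boundary gradient comparison for (i), and a direct use of the integral-curvature estimate (\ref{e29}) from Lemma \ref{l22} for (ii).

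For part (i), the strategy is to combine the representation $\hbox{pcap}(A)=\int_{\partial A}|\nabla u_A|^{p-1}\,d\mathcal H^{n-1}$ from (\ref{e11}) with the pointwise lower bound $|\nabla u_A|\ge \frac{n-p}{p-1}\alpha$ on $\partial A$. To establish the latter, fix $x\in\partial A$ and let $B$ be the outer supporting ball of radius $\alpha^{-1}$ furnished by $\alpha$-convexity, so that $A\subseteq B$ and $x\in\partial A\cap\partial B$. The $p$-equilibrium potential $u_B$ of $B$ is explicit and radial, with
$$u_B(y)=\left(\frac{\alpha^{-1}}{|y-c_B|}\right)^{(n-p)/(p-1)}\quad\hbox{for}\quad|y-c_B|>\alpha^{-1},$$
so a direct calculation yields $|\nabla u_B|\equiv \frac{n-p}{p-1}\alpha$ on $\partial B$, consistent with (\ref{ballcap}). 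Both $u_A$ and $u_B$ are $p$-harmonic on $\mathbb R^n\setminus B$, vanish at infinity, and satisfy $u_A\le 1=u_B$ on $\partial B$; the comparison principle for the $p$-Laplacian then yields $u_A\le u_B$ throughout $\mathbb R^n\setminus B$. Because $u_A(x)=u_B(x)=1$ and $A$, $B$ share the outward unit normal $\nu$ at $x$, comparing one-sided derivatives in direction $\nu$ forces $|\nabla u_A(x)|\ge |\nabla u_B(x)|=\frac{n-p}{p-1}\alpha$. Integrating this pointwise estimate over $\partial A$ and dividing by $\sigma_{n-1}$ rearranges to (\ref{e24e}).

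For part (ii), raising (\ref{e29}) to the $(n-p)$-th power yields
$$\left(\frac{p-1}{n-p}\right)^{p-1}\frac{\hbox{pcap}(A)}{\sigma_{n-1}}\le\frac{1}{\sigma_{n-1}}\int_{\partial A}\big(H(\partial A,\cdot)\big)^{p-1}\,d\mathcal H^{n-1},$$
and invoking the hypothesis $0\le H\le\beta$ bounds the integrand by $\beta^{p-1}$, so the right-hand side is at most $\beta^{p-1}\mathcal H^{n-1}(\partial A)/\sigma_{n-1}$. A one-line rearrangement is (\ref{e24ee}).

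For the rigidity in the equality cases: equality in (i) forces $|\nabla u_A|\equiv\frac{n-p}{p-1}\alpha$ on $\partial A$, an overdetermined Serrin-type boundary condition for the exterior $p$-Laplace problem whose symmetry results (due to Reichel and Garofalo-Sartori) force $A$ to be a ball, the radius being pinned to $\alpha^{-1}$ by matching the gradient value. Equality in (ii) forces simultaneously the equality case of (\ref{e29})---which Lemma \ref{l22} identifies with a ball---and $H\equiv\beta$ on $\partial A$, together identifying $A$ as the ball of radius $\beta^{-1}$. The main obstacle I anticipate is the careful invocation of the rigidity for the overdetermined exterior $p$-Laplace problem; the inequalities themselves are short consequences of tools already in place.
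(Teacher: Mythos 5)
Your proof of part (i) is correct and takes a slightly different route from the paper: you compare $u_A$ against the explicit radial barrier $u_B$ of a supporting exterior ball, one point at a time, whereas the paper builds a single comparison function $v(x)=\phi(d(x,A))$ with $\phi(t)=(1+\alpha t)^{(p-n)/(p-1)}$ on all of $\mathbb R^n\setminus A$, verifies it is a $p$-supersolution via (\ref{e14}), and then runs the comparison and Hopf argument once. Both are comparison-plus-boundary-gradient arguments; your version sidesteps the computation showing $\operatorname{div}(|\nabla v|^{p-2}\nabla v)\le 0$, at the cost of having to justify the boundary normal-derivative comparison at each $x\in\partial A$ in the $\mathcal H^{n-1}$-a.e. sense (where $\nabla u_A$ is only a non-tangential limit). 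For the rigidity in (i) you cite Serrin/Reichel overdetermined-problem results, which is a legitimate but external route; the paper derives rigidity internally from $u=v$ and the resulting forced constancy of the curvatures of the level sets.

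There is, however, a genuine gap in your proof of part (ii). You deduce (\ref{e24ee}) from (\ref{e29}) by bounding $H^{p-1}\le\beta^{p-1}$, but Lemma \ref{l22} (hence inequality (\ref{e29})) is only proved for $p\in[2,n)$, whereas Lemma \ref{l2new} asserts (\ref{e24ee}) for the full range $p\in(1,n)$. The paper explicitly flags this: it notes that the route through (\ref{e29}) works under $p\in[2,n)$, and then gives a second, self-contained argument valid for all $p\in(1,n)$ by rerunning the barrier comparison of part (i) with $\phi(t)=(1+\beta t)^{(p-n)/(p-1)}$, using $H_t\le\beta/(1+\beta t)$ to make $v$ a subsolution, reversing all the inequalities in (\ref{eHe})--(\ref{eIe}). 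In your framework, the analogous fix is to compare $u_A$ with the equilibrium potential of an \emph{inner} tangent ball of radius $\beta^{-1}$ (which the bound $H\le\beta$ supplies for $C^2$ convex boundaries), obtaining the reversed gradient inequality $|\nabla u_A|\le\frac{n-p}{p-1}\beta$ on $\partial A$. As written, your argument does not cover $p\in(1,2)$.

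A second, smaller point: for the rigidity of (ii) you invoke the equality case of (\ref{e29}), but Lemma \ref{l22} is only stated with ``equality if $A$ is a ball'' (sufficiency); it does not assert that equality forces $A$ to be a ball. So you cannot read off necessity from Lemma \ref{l22} alone. The paper handles rigidity in (ii) by the same mechanism as in (i): equality in the integrated boundary estimate forces $|\nabla u|=|\nabla v|$ on $\partial A$, hence $u\equiv v$, hence the level sets are concentric spheres. You would need to supply an analogous closing argument rather than appeal to Lemma \ref{l22}.
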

\begin{proof} (i) To prove (\ref{e24e}), let us keep in mind the fact that if $\partial A$ is of $C^2$ then $A$ is $\alpha$-convex if and only if each principal curvature $\kappa_j$ of $\partial A$ is not less than $\alpha$, i.e., $\kappa_j\ge\alpha$.

Following the argument for Hurtado-Palmer-Ritor\'e's \cite[Theorem 4.5]{HPR} which is just the case $p=2$ of (\ref{e24e}) we set
$$
v(x)=\phi\big(d(x,A)\big)\quad\&\quad\phi(t)=(1+\alpha t)^\frac{p-n}{p-1}.
$$
Then $v$ is of $C^{1,1}$ in $\mathbb R^n\setminus A$. Given $t\in (0,\infty)$. If $x\in\mathbb R^n\setminus A$ is such a point that $d(x,A)$ is twice differentiable along the line minimizing $d(x,A)$ and if
$$
A_t=\{y\in\mathbb R^n:\ \hbox{dist}(y,A)\le t\},
$$ 
then on this line one utilizes (\ref{e14}) to derive
$$
\hbox{div}(|\nabla v|^{p-2}\nabla v)=\big|\phi'\big(d(x,A)\big)\big|^{p-2}\Big((n-1)H_t(x)\phi'\big(d(x,A)\big)+(p-1)\phi''\big(d(x,A)\big)\Big)
$$
where $H_t$ stands for the mean curvature of the hypersurface $\partial A_t$ which is parallel to $\partial A$. Note that $A_t$ is $(t+\alpha^{-1})^{-1}$-convex. So, one has 
\begin{equation}
\label{eHe}
H_t\ge \alpha/(1+\alpha t)
\end{equation}
at the regular points in $\partial A_t$. Recall that $u=u_A$ is the $p$-equilibrium potential. A simple calculation gives
$$
\phi'(t)=\alpha\Big(\frac{p-n}{p-1}\Big)(1+t\alpha)^\frac{1-n}{p-1}\le 0.
$$
This, along with (\ref{eHe}) and a simple computation, shows that
\begin{align*}
&\hbox{div}(|\nabla v|^{p-2}\nabla v)\\
&=\big|\phi'\big(d(x,A)\big)\big|^{p-2}\alpha(n-1)\Big(\frac{p-n}{p-1}\Big)\big(1+d(x,A)\alpha\big)^{\frac{1-n}{p-1}-1}
\Big(\big(1+\alpha d(x,A)\big)H_t-\alpha\Big)\\
&\le 0=\hbox{div}(|\nabla u|^{p-2}\nabla u)
\end{align*}
holds whenever $x\mapsto d(x,A)$ is of $C^2$.

Next, we prove that $v\ge u$ holds in $\mathbb R^n\setminus A$. For the above given $t>0$ let $u_t$ and $\phi_t$ be the $p$-equilibrium potentials of the rings $$
(A_t,A)\quad\&\quad \big((t+\alpha^{-1})\mathbb B^n, \alpha^{-1}\overline{\mathbb B^n}\big)
$$ 
respectively (cf. \cite{Lewis}), as well as, set $v_t=\phi_t\big(d(x,A)\big)$. Then the last $\hbox{div}$-estimate, plus an integration-by-part argument, implies that
$$
\hbox{div}(|\nabla v_t|^{p-2}\nabla v_t)\le\hbox{div}(|\nabla u_t|^{p-2}\nabla u_t)\quad\hbox{in}\quad A_t\setminus A
$$
is valid in the distributional sense. Now, from the weak comparison principle for $p$-Laplacian (see e.g. \cite{Tol1}) it follows that $v_t\ge u_t$ holds in $A_t\setminus A$, and so that $v\ge u$ is valid in $\mathbb R^n\setminus A$ via letting $t\to\infty$.

Note also that $\nabla u$ and $\nabla v$ have non-tangential limit $\mathcal{H}^{n-1}$-almost everywhere on $\partial A$. So, if $x\in\partial A$, then $\nabla u$ and $\nabla v$ can be defined at $x$. Upon extending $u$ and $v$ continuously to $x$ and $B$ being an exterior ball to $A$, and utilizing
\begin{equation}
\label{eCe}
\begin{cases}
\hbox{div}(|\nabla v|^{p-2}\nabla v)\le\hbox{div}(|\nabla u|^{p-2}\nabla u)\quad\hbox{in}\quad B;\\
u(x)=v(x)=1\quad\hbox{for}\quad x\in\partial A;\\
v(x)\ge u(x)\quad\hbox{for}\quad x\in\mathbb R^n\setminus A;\\
v-u\quad \hbox{continuous\ on}\quad \overline{B},
\end{cases}
\end{equation}
as well as taking the Hopf maximum principle into account, we get 
\begin{equation}\label{eGe}
|\nabla v(x)|\le|\nabla u(x)|\quad\forall\quad x\in\partial A.
\end{equation}
An application of (\ref{e11}) gives that
\begin{align}\label{eIe}
&\hbox{pcap}(A)\nonumber\\
&=\int_{\partial A}|\nabla u|^{p-1}\,d\mathcal{H}^{n-1}\nonumber\\
&\ge\int_{\partial A}|\nabla v|^{p-1}\,d\mathcal{H}^{n-1}\\
&=\big(-\phi'(0)\big)^{p-1}\mathcal{H}^{n-1}(\partial A)\nonumber\\
&=\left(\Big(\frac{n-p}{p-1}\Big)\alpha\right)^{p-1}\mathcal{H}^{n-1}(\partial A),\nonumber
\end{align}
namely, (\ref{e24e}) holds.

Of course, if $A$ is a ball with radius $\alpha^{-1}$, then equality of (\ref{e24e}) trivially holds. Conversely,
when equality of (\ref{e24e}) is true, (\ref{eIe}) is employed to derive that $|\nabla u(x)|=|\nabla v(x)|$ holds for $\mathcal{H}^{n-1}$-almost every points $x\in \partial A$. Consequently, $u=v$ holds on any exterior ball to $A$ and therefore it still true in $\mathbb R^n\setminus A$. So, the level sets of $u$ and $v$ are the same. Thanks to $u\in C^\infty(\mathbb R^n\setminus A)$ (cf. \cite{ColS}), the level sets of $u$ are $C^\infty$ hypersurfaces. Since 
$$
|\nabla v(x)|=|\phi'\big(d(x,A)\big)||\nabla d(x,A)|\not=0\quad\forall\quad x\in\mathbb R^n\setminus A,
$$
one has that $|\nabla u|=|\nabla v|$ does not vanish. Consequently,
$$
\begin{cases}
H_t={\alpha}/{(1+\alpha t)};\\
\hbox{div}(|\nabla v|^{p-2}\nabla v)=\hbox{div}(|\nabla u|^{p-2}\nabla u).
\end{cases}
$$
This in turn derives that the principal curvatures of $\partial A_t$ equal $(t+\alpha^{-1})^{-1}$, and so that $(A_t)_{t>0}$ are concentric balls with radius $\alpha^{-1}+t$. Therefore, $A$ is a ball of radius $\alpha^{-1}$.

(ii) The general inequality (\ref{e24ee}) can be also verified by slightly modifying the above argument for (i). The key is the selection of the function pair $(v,\phi)$ for (ii) - more precisely -
$$
v(x)=\phi\big(d(x,A)\big)\quad\&\quad \phi(t)=(1+\beta t)^\frac{p-n}{p-1}.
$$
Under this choice, $\alpha$, (\ref{eHe}), (\ref{eGe}), and (\ref{eIe}) will be replaced by
$$
\begin{cases}
\beta,\\
H_t\le \beta/(1+\beta t),\\
\begin{cases}
\hbox{div}(|\nabla v|^{p-2}\nabla v)\ge\hbox{div}(|\nabla u|^{p-2}\nabla u)\quad\hbox{in}\quad B;\\
u(x)=v(x)=1\quad\hbox{for}\quad x\in\partial A;\\
v(x)\le u(x)\quad\hbox{for}\quad x\in\mathbb R^n\setminus A;\\
v-u\quad \hbox{continuous\ on}\quad \overline{B},
\end{cases}\\
|\nabla v(x)|\ge|\nabla u(x)|\quad\forall\quad x\in\partial A,
\end{cases}
$$
and
\begin{align*}
&\hbox{pcap}(A)\\
&=\int_{\partial A}|\nabla u|^{p-1}\,d\mathcal{H}^{n-1}\\
&\le\int_{\partial A}|\nabla v|^{p-1}\,d\mathcal{H}^{n-1}\\
&=\left(\Big(\frac{n-p}{p-1}\Big)\beta\right)^{p-1}\mathcal{H}^{n-1}(\partial A),
\end{align*}
as desired.

The argument for equality of (\ref{e24ee}) is similar to that for equality of (\ref{e24}) (but this time, just using the last estimation), and so left for the interested reader.
\end{proof}

\section{Proofs of Theorem and Its Corollary}\label{s3}
\setcounter{equation}{0}

We are ready to prove Theorem \ref{t1} and its Corollary \ref{c1}.

\begin{proof}[Proof of Theorem \ref{t1}]

(i) Note that if a sequence of balls $(B_j)_{j\ge 1}$ in $\mathscr C^n$ tends to a single-point set then $\big(F_{\hbox{pcap}}(B_j)\big)_{j\ge 1}$ approaches zero. So, $\sup_{A\in\mathscr C^n}F_{\hbox{pcap}}(A)$ must be nonnegative. So, if $F_{\hbox{pcap}}(\cdot)$ attains its supremum at $A_0\in\mathscr C^n$, then $F_{\hbox{pcap}}(A_0)\ge 0$, and hence the only-if-part is verified. 

To see the if-part, suppose there exists $B_0\in\mathscr C^n$ such that $F_{\hbox{pcap}}(B_0)\ge 0$. This, along with the hypothesis $\|h\|_{L^1(\mathbb R^n)}<\infty$ implies 
$$
0\le F_{\hbox{pcap}}(B_0)\le \sup_{A\in\mathscr C^n} F_{\hbox{pcap}}(A)\le \|h\|_{L^1(\mathbb R^n)}<\infty.
$$ 
As a result, there is a sequence $(A_j)_{j\ge 1}$ in $\mathscr C^n$ such that 
$$
0<F_{\hbox{pcap}}(A_j)\to \sup_{A\in\mathscr C^n} F_{\hbox{pcap}}(A).
$$

If the inradii of $(A_j)_{j\ge 1}$ have no a uniform positive lower bound, then two situations should be considered. The first is that $(A_j)_{j\ge 1}$ collapses into a single-point set $\{a\}\in\mathscr C^n$. This situation shows the degenerate result:
$$
\sup_{A\in\mathscr C^n}F_{\hbox{pcap}}(A)=0=F_{\hbox{pcap}}(\{a\}).
$$
 The second is that $(A_j)_{j\ge 1}$ does not collapse into a single-point set, and consequently there is a subsequence $(A_{j_k})_{k\ge 1}$ such that its inradius sequence $(r_{j_k})_{k\ge 1}$ tends to zero while $\hbox{pcap}(A_{j_k})\to s\in (0,\infty]$ thanks to
 $$
 0\le F_{\hbox{pcap}}(B_0)\le\|h\|_{L^1(\mathbb R^n)}-\inf_{A\in\mathscr C^n}\hbox{pcap}(A).
 $$
Now, an application of \cite[Theorem 2.1, (2.5)]{Xiao} (cf. \cite[Theorem 3.2]{Xu} for a rough constant) produces  
  $$
  \left(\frac{\mathcal{H}^{n-1}(\partial A_{j_k})}{\sigma_{n-1}}\right)^\frac{1}{n-1}\le \left(\frac{p(n-1)}{n(p-1)}\right)^\frac{p-1}{n-p}
  \left(\Big(\frac{p-1}{n-p}\Big)^{p-1}\Big(\frac{\hbox{pcap}(A_{j_k})}{\sigma_{n-1}}\Big)\right)^\frac{1}{n-p}.
  $$
 So, if $\big(\mathcal{H}^{n-1}(\partial A_{j_k})\big)_{k\ge 1}$ is unbounded, then $s=\infty$ and hence a contradiction occurs below:
 $$ 
  0\le F_{\hbox{pcap}}(B_0)\le \|h\|_{L^1(\mathbb R^n)}-\lim_{kto\infty}\hbox{pcap}(A_{j_k})=-\infty<0.
  $$ 
 On the other hand, if $\big(\mathcal{H}^{n-1}(\partial A_{j_k})\big)_{k\ge 1}$ is bounded, then an application of the known Osserman inradius inequality (cf. \cite{Os, Sa}) ensures
 $$
 \mathcal{L}^n(A_{j_k})\le r_{j_k}\mathcal{H}^{n-1}(\partial A_{j_k})-(n-1)r^2_{j_k}\sqrt{{\omega_n}\big(n^{-1}\mathcal{H}^{n-1}(\partial A_{j_k})\big)^{n-2}},
 $$
and hence $\mathcal{L}^n(A_{j_k})\to 0$ owing to $r_{j_k}\to 0$. This, plus $h\in L^1(\mathbb R^n)$, derives the following contradiction:
 $$ 
 0\le F_{\hbox{pcap}}(B_0)\le \lim_{k\to\infty}F_{\hbox{pcap}}(A_{j_k})=\lim_{k\to\infty}\left(\int_{A_{j_k}}h\,d\mathcal{L}^n-\hbox{pcap}(A_{j_k})\right)=-s<0.
 $$ 
The above analysis for $s\in (0,\infty]$ indicates that the second situation will not happen. 

Thus, it remains to deal with the case that the inradii of $(A_j)_{j\ge 1}$ have a uniform positive lower bound $r_0$. Under this case, using (\ref{ballcap}) and (\ref{e23}) we obtain

\begin{equation}\label{e31}
0<r_0=\left(\Big(\frac{p-1}{n-p}\Big)^{p-1}\Big(\frac{\hbox{pcap}(r_0\mathbb B^n)}{\sigma_{n-1}}\Big)\right)^\frac1{n-p}\le\left(\Big(\frac{p-1}{n-p}\Big)^{p-1}\Big(\frac{\hbox{pcap}(A_j)}{\sigma_{n-1}}\Big)\right)^\frac1{n-p}\le\frac{ \hbox{diam}(A_j)}{2}.
\end{equation}
Utilizing $h\in L^1(\mathbb R^n)$ again, we get
$$
F_{\hbox{pcap}}(A)\le\|h\|_{L^1(\mathbb R^n)}-\hbox{pcap}(A)\quad\forall\quad A\in\mathscr{C}^n,
$$
whence discovering via (\ref{e15})
\begin{equation}
\label{e31e}
F_{\hbox{pcap}}(A_j)\le \|h\|_{L^1(\mathbb R^n)}- \sigma_{n-1}\Big(\frac{p-1}{n-p}\Big)^{1-p}\left(\frac{\mathcal{L}^n(A_j)}{\omega_n}\right)^{\frac{n-p}{n}}.
\end{equation}
Consequently, if $\big(\hbox{diam}(A_j)\big)_{j\ge 1}$ were unbounded, then $\big(\mathcal{L}^n(A_j)\big)_{j\ge 1}$ would be unbounded due to \eqref{e31}, and hence an application of (\ref{e31e}) would derive that $\big(F_{\hbox{pcap}}(A_j)\big)_{j\ge 1}$ has a subsequence approaching $-\infty$ -- this is impossible thanks to 
$$
\lim_{j\to\infty}F_{\hbox{pcap}}(A_j)\ge F_{\hbox{pcap}}(B_0)\ge 0.
$$ 
Therefore, all diameters $\hbox{diam}(A_j)$ have a uniform upper bound. Now, by \eqref{e31} and the classic Blaschke selection principle (see e.g. \cite[Theorem 1.8.6]{Schn}), we can choose a subsequence of $(A_j)_{j\ge 1}$ that converges to a non-degenerate $A_0\in\mathbb K^n$. Since $\hbox{pcap}(\cdot)$ is continuous (cf. \cite[Pages 142-143]{Maz}) and $h\in C(\mathbb R^n)$ (i.e., $h$ is continuous in $\mathbb R^n$), $F_{\hbox{pcap}}(\cdot)$ is continuous, and so, $A_0$ is a maximizer of $F_{\hbox{pcap}}(\cdot)$, i.e.,
$$
F_{\hbox{pcap}}(A_0)=\sup_{A\in\mathscr C^n}F_{\hbox{pcap}}(A),
$$
as desired.

(ii) For $A, B\in\mathbb K^n$ and $t\in(0,1)$ let $C_t=A+tB$. Then $$
C_t\in\mathbb K^n\quad\&\quad h_{C_t}=h_A+th_B.
$$ 
Using Tso's variational formula for $\int_A h\,d\mathcal{L}^n$ in \cite[(4)]{Tso2} and the variational formula for $\hbox{pcap}(\cdot)$ in \cite[Theorem 1.1]{CLNSXYZ} (see also \cite[Corollary 3.16]{Je96a} or \cite[Theorem 2.5]{Je96b} for $\hbox{2cap}(\cdot)$), we obtain
\begin{equation}\label{e32}
\frac{d}{dt}F_{\hbox{pcap}}(C_t)\Big|_{t=0}=\int_{\partial A}h_B(\mathsf{g}) h\, d\mathcal{H}^{n-1}-\int_{\partial A}h_B(\mathsf{g})(p-1)|\nabla u_A|^p\,d\mathcal{H}^{n-1}.
\end{equation}
Obviously, if $A$ is a maximizer of $F_{\hbox{pcap}}(\cdot)$, then it must be a critical point of $F_{\hbox{pcap}}(C_t)$ and thus 
$$
\frac{d}{dt}F_{\hbox{pcap}}(C_t)\big|_{t=0}=0.
$$ 
This and (\ref{e32}) derive
\begin{equation}\label{e33a}
\int_{\partial A}h_B(\mathsf{g})(p-1)|\nabla u_A|^p\,d\mathcal{H}^{n-1}=\int_{\partial A}h_B(\mathsf{g}) h\, d\mathcal{H}^{n-1}.
\end{equation}
A combined application of (\ref{e33a}) and \cite[Lemmas 1.7.9 \& 1.8.10]{Schn} gives that

\begin{eqnarray*}
&&\int_{\mathbb S^{n-1}}\phi \mathsf{g}_\ast\big((p-1)|\nabla u_A|^p\,d\mathcal{H}^{n-1}\big)\\
&&=\int_{\partial A} \phi(\mathsf{g})(p-1)|\nabla u_A|^p\,d\mathcal{H}^{n-1}\\
&&=\int_{\partial A}\phi(\mathsf{g}) h\, d\mathcal{H}^{n-1}\\
&&=\int_{\mathbb S^{n-1}}\phi \mathsf{g}_\ast\big(h\,d\mathcal{H}^{n-1}\big)
\end{eqnarray*}
holds for any $\phi\in C(\mathbb S^{n-1})$, and thereby reaching (\ref{e13}). Moreover, if $\partial A$ is $C^2$ strictly convex, then the Gauss map from $\partial A$ to $\mathbb S^{n-1}$ is a diffeomorphism, and hence (\ref{e13}) is equivalent to
$$
(p-1)|\nabla u_A(x)|^p=h(x)\quad\forall\quad x\in\partial A.
$$

(iii) Suppose $h\in C^{k,\alpha}$ with $k$ being a nonnegative integer. Since $\partial A$ is of $C^2$, an application of \cite[Theorem 1]{Lie} (cf. \cite{GarS, DiB, Tol, UK, GiT, MouY}) yields that $u_A\in C^{1,\hat{\alpha}}(A)$ is valid for some $\hat{\alpha}\in (0,1)$. The last equation and $h\in C^{k,\alpha}(\mathbb R^n)$ with $\alpha\in (0,1)$ derive that
 $$
 |\nabla u_A|=\left(\frac{h}{p-1}\right)^{\frac1p}
 $$
 is of $C^{k,\alpha}$. Note that $\partial A$ is $C^2$ strictly convex. So, if $\partial A$ is represented locally as $y_n=\psi(x_1,...,x_{n-1})$, then the map
 $$
 (x_1,...,x_{n-1})\mapsto\nabla u_A\big(x_1,...,x_{n-1},\psi(x_1,...,x_{n-1})\big)
 $$
 is of $C^{k,\alpha}$. Thus, a combination of the chain rule (or the implicit function theorem) and the estimate $0<\inf_{\partial A}h\le\sup_{\partial A}h<\infty$ imply that $\psi$ is of $C^{1+k,\alpha}$. This in turn implies that $\partial A$ is of $C^{1+k,\alpha}$.
\end{proof}

\begin{proof}[Proof of Corollary \ref{c1}] The argument for Corollary (i) is very similar to that for Theorem \ref{t1}(i) except that (\ref{e31}) and (\ref{e31e}) are replaced respectively by their endpoint $(p=1$) cases:
\begin{equation*}\label{e311}
0<r_0=\left(\frac{\mathcal{H}^{n-1}\big(\partial(r_0\mathbb B^n)\big)}{\sigma_{n-1}}\right)^\frac1{n-1}\le \left(\frac{\mathcal H^{n-1}(\partial A_j)}{\sigma_{n-1}}\right)^\frac1{n-1}
\le 2^{-1}\hbox{diam}(A_j)
\end{equation*}
and
\begin{equation*}
\label{e311e}
F_{\mathcal{H}^{n-1}}(A_j)\le \|h\|_{L^1(\mathbb R^n)}- \sigma_{n-1}\left(\frac{\mathcal{L}^n(A_j)}{\omega_n}\right)^{\frac{n-1}{n}}.
\end{equation*}

To reach Corollary (ii), recall that for $C_t=A+tB$ with $A,B\in\mathbb K^n$ and $t\in (0,1)$ (in the proof of Theorem \ref{t1} (ii)) there exists a curvature measure $\mu_{\mathcal{H}^{n-1},A}$ on $\mathbb S^{n-1}$ such that
$$
\begin{cases}
\mathcal{H}^{n-1}(\partial A)=(n-1)^{-1}\int_{\mathbb S^{n-1}}h_A\,d\mu_{\mathcal{H}^{n-1},A};\\
\frac{d}{dt}\mathcal{H}^{n-1}(\partial C_t)\Big|_{t=0}=\int_{\mathbb S^{n-1}}h_B\,d\mu_{\mathcal{H}^{n-1},A}.
\end{cases}
$$
Since $A$ is a maximizer of $F_{\mathcal{H}^{n-1}}(\cdot)$, it is a critical point of $F_{\mathcal{H}^{n-1}}(\cdot)$, and consequently,
$$
\frac{d}{dt}F_{\mathcal{H}^{n-1}}(C_t)\big|_{t=0}=0,
$$
whence yielding (\ref{e16}) via
$$
d\mu_{\mathcal{H}^{n-1},A}=\mathsf{g}_\ast(h\,d\mathcal{H}^{n-1}).
$$ 
Furthermore, if $\partial A$ is $C^2$ strictly convex, then the Gauss map $\mathsf{g}:\partial A\mapsto \mathbb S^{n-1}$ is a diffeomorphic transformation, and hence (\ref{e16}) reduces to the mean curvature equation
$$
 h(x)=H(\partial A,x)\quad\forall\quad x\in\partial A
 $$ 
through using the variational formula for $\mathcal{H}^{n-1}$ (see e.g. \cite{CFG, CG, Col, Col1})
$$
\frac{d}{dt}{\mathcal{H}^{n-1}}(\partial C_t)\Big|_{t=0}=(n-1)\int_{\partial A}h_B(\mathsf{g}) H(\partial A,\cdot)\,d\mathcal{H}^{n-1}(\cdot).
$$

 To validate Corollary (iii), note once again that under $\partial A$ being $C^2$ strictly convex one has that if $A\in\mathbb K^n$ is a maximizer of $F_{\mathcal{H}^{n-1}}$ then  $h(\cdot)=H(\partial A,\cdot)$ holds on $\partial A$. Also, since (cf. \cite[Page 197]{DeZ})
 $$
 (n-1)H(\partial A,x)=\Delta b_A(x)\quad\forall\quad x\in\partial A
 $$ 
 where
$$
b_A=d_A-d_{\mathbb R^n\setminus A}\quad\&\quad
d_E(x)=\hbox{dist}(x,E)=\min_{y\in E}|x-y|\quad{\forall}\quad E\in\mathbb K^n,
$$
one concludes that 
$$
\Delta b_A(x)=(n-1)h(x)\quad\forall\quad x\in\partial A,
$$
and so $b_A$ is of $C^{k+2,\alpha}$ provided $h$ is of $C^{k,\alpha}$, and consequently, $\partial A$ is of $C^{k+2,\alpha}$ due to Delfour-Zol\'esio's \cite[Theorem 5.5]{DeZ}.
\end{proof}

\begin{proof}[Remark] The previous arguments for Theorem \ref{t1} and its Corollary \ref{c1}, (\ref{e15})-(\ref{e15e}), the classic variational formula for the volume, and regularities for the Monge-Amp\'ere equations established in \cite{BaK, Caf, U} can be used to obtain a natural Minkowski type property -- under the hypothesis that $h\in L^1(\mathbb R^n)$ is positive and continuous, $k$ is a nonnegative integer, $\alpha\in (0,1)$, $\beta\in (0,\infty)$, $\mathbb K^n_\beta$ consists of all elements in $\mathbb K^n$ whose inradii are not less than $\beta$, and
$$
F_{\mathcal{L}^n}(A)=\int_A h\,d\mathcal L^n-\mathcal{L}^n(A)\quad\forall\quad A\in \mathbb K_\beta^n,
$$
one has:
\begin{itemize}
\item There is $A_0\in\mathbb K^n_\beta$ such that $F_{\mathcal{L}^n}(A_0)=\sup_{A\in\mathscr C^n}F_{\mathcal{L}^n}(A)\ge 0$ if and only if there exists $B_0\in \mathbb K_\beta^n$ such that $F_{\mathcal{L}^n}(B_0)\ge 0$.

\item Suppose $A\in \mathbb K^n_\beta$ is a maximizer of $F_{\mathcal L^{n}}(\cdot)$ over $\mathbb K^n_\beta$. Then there is a Borel measure $\mu_{{\mathcal L^{n}},A}$ on $\mathbb S^{n-1}$ such that $d\mu_{{\mathcal L^{n}},A}=\mathsf{g}_\ast(h\,d\mathcal{H}^{n-1})$, namely,

$$
    \int_{\mathbb S^{n-1}}\phi\,d\mu_{{\mathcal L^{n}},A}=\int_{\mathbb S^{n-1}}\phi \mathsf{g}_\ast(h\,d\mathcal{H}^{n-1})\ \ \forall\ \phi\in C(\mathbb S^{n-1}).
$$
In particular, if $\partial A$ is $C^2$ strictly convex, then such a maximizer $A$ satisfies the inverse Gauss curvature equation $h(\cdot)=\big(G(\partial A,\cdot)\big)^{-1}$.

\item If $h$ is of $C^{k,\alpha}$ and $A$, with $\partial A$ being $C^2$ strictly convex, is a maximizer of $F_{\mathcal L^{n}}(\cdot)$ over $\mathbb K^n_\beta$, then $\partial A$ is of $C^{k+2,\alpha}$.
\end{itemize}
\end{proof}

\noindent{\bf Acknowledgement.} The author is grateful to the referee for his/her useful comment on the proof of Theorem \ref{t1}(i): $\mathcal{H}^{n-1}(\partial A_{j_k})\to\infty$ can be also obtained by applying the Bonessen-Fuglede inequality in \cite{Bo, Fu} to get that if $r_{j_k}\to 0$, then the corresponding circumradius tends to infinity due to $\mathcal{L}^n(A_{j_k})\to 0$ and hence the isoperimetric deficit approaches infinity.


\begin{thebibliography}{99}


\bibitem{BaK} I. Ja. Bakel'man and B. E. Kantor,
{\it Estimates of the solutions of quasilinear elliptic equations that are connected with problems of geometry ``in the large''}, Mat. Sb. (N.S.) 91(133)(1973)336-349,471.

\bibitem{Bo} T. Bonnesen, {\it \"Uber die sioperimetrische Defizit ebener Figuren}, Math. Ann. 91(1924)252-268.

\bibitem{Bor} C. Borell, {\it Hitting probability of killed Brownian motion: A study on geometric regularity}, Ann. Sci. Ecole Norm. Sup\'er. Paris 17(1984)451-467.

\bibitem{BM} H. L. Bray and P. Miao, {\it On the capacity of surfaces in manifolds with nonnegative scalar curvature}, Invent. Math. 172(2008)459-475.

\bibitem{Caf} L. Caffarelli, {\it Interior a priori estimates for solutions of fully non-linear equations}, Ann. Math. 131(1989)189-213.

\bibitem{CNS} L. Caffarelli, L. Nirenberg and J. Spruck, {\it Nonlinear second order elliptic. IV. Starshaped compact Weingarten hypersufaces,} in: Ohya, Y. et al. (eds) Current Topic in PDE's, pp.1-26 Tolyo: Kinokumize 1988.

\bibitem{Cha} G. D. Chakerian, {\it Isoperimetric inequalities for the mean width of a convex body}, Geometriae Ded. 1(1973)356-362.


\bibitem{ChW} K.-S. Chou and X.-J. Wang, {\it A logarithmic Gauss curvature flow and the Minkowski problem}, Ann. Inst. Henri Poincar\'e, Analyse non lin\'eaire 17,6(2000)733-751.

\bibitem{Col} A. Colesanti, {\it Brunn-Minkowski inequalities for variational functionals and related problems}, Adv. Math. {194}(2005)105-140.


\bibitem{Col1} A. Colesanti, {\it From the Brunn-Minkowski inequality to a class of Poincar\'e type inequalities}, Commun. Contemp. Math. {10}(2008)765-772.

\bibitem{CG} A. Colesanti and E. S. G\'omez, {\it Functional inequalities derived from the Brunn-Minkowski inequalities for quermassintegrals}, J. Convex Anal. 17(2010)35-49.

\bibitem{CLNSXYZ} A. Colesanti, K. Nystr\"om, P. Salani, J. Xiao, D. Yang and G. Zhang, {\it The Hadamard variational formula and the Minkowski problem for $p$-capacity}, Adv. Math. 285(2015)1511-1588.

\bibitem{ColS} A. Colesanti and P. Salani,
{\it The Brunn-Minkowski inequality for $p$-capacity of convex bodies},
Math. Ann. {327}(2003)459-479.

\bibitem{CFG} G. Crasta, I. Fragal\'a and F. Gazzola, {\it On a long-standing conjecture by P\'olya-Szeg\"o and related topics}, Z. Angew. Math. Phys. 56(2005)763-782.

\bibitem{Del} P. Delano\"e, {\it Plongements raiaux $S^n\to R^{n+1}$ a courbure de Gauss positive prescrite}, Ann. Sci. \'Ec. Norm. Sup'er. 18(1988)635-649.

\bibitem{DeZ} M. C. Delfour and J.-P. Zol\'esio, {\it Shape analysis via oriented distance functions}, J. Funct. Anal. 123(1994)129-201.

\bibitem{DiB} E. DiBenendetto, {\it $C^{1,\alpha}$ local regularity of weak solutions of degenerate elliptic equations}, Nonlinear Anal. 7(1983)827-850.

\bibitem{Ei} M. Eichmair, {\it The Plateau problem for marginally outer trapped surfaces}, J. Differential Geom. 83(2009)551-583.

\bibitem{EvaG} L. C. Evans and R. F. Gariepy, {\it Measure Theory and Fine Properties of Functions}, CRC Press LLC, 1992.


\bibitem{Fed} H. Federer, {\it Geometric Measure Theory}, Springer-Verlag, 1969.

\bibitem{Fu} B. Fuglede, {\it Stability in the isoperimetric problem for convex or nearly spherical domains in $\bf R^n$}, Trans. Amer. Math. Soc. 314(1989)619-638.

\bibitem{FS} A. Freire and F. Schwartz, {\it Mass-capacity inequalities for conformally flat manifolds with boundary},  Comm. Partial Differential Equations 39(2014)98-119. 

\bibitem{GH} R. J. Gardner and D. Hartenstine, {\it Capacities, surface area, and radial sums}, Adv. Math. 221(2009)601-626.

\bibitem{GarS} N. Garofalo and E. Sartori, {\it Symmetry in exterior boundary value problems for quasilinear elliptic equations via blow-up and a priori estimates}, Adv. Differential Equations 4(1999)137-161.

\bibitem{Ge2} F. W. Gehring, {\it Inequalities for condensers, hyperbolic capacity, and extremal lengths}, Michigan Math. J. 18(1971)1-20.

\bibitem{GiT} D. Gilbarg and N. Trudinger, {\it Elliptic Partial Differential Equations of Second Order}, 2nd edition, Springer-Verlag, 1983.

\bibitem{Gri} P. Gritzmann, {\it A characterization of all loglinear inequalities for three quermassintegrals of convex bodies}, Proc. Amer. Math. Soc. 104(1988)563-570.


\bibitem{GLL} P. Guan, J. Li and Y. Li, {\it Hypersurfaces of prescribed curvature measures}, Duke Math. J. 161(2012)1927-1942.

\bibitem{GLM} P. Guan, C. Lin and X.-N. Ma, {\it The existence of convex body with prescribed curvature measures}, IMRN, 2009(2009)1947-1975.

\bibitem{HKM} J. Heinonen, T. Kilpel\"ainen and O. Martio, {\it Nonlinear Potential Theory of Degenerate Elliptic Equations}, Dover Publications, Inc., Mineola, New York, 2006.

\bibitem{HSW} Y.-J. Hsu, S.-J. Shiau and T.-H. Wang, {\it Graphs with prescribed mean curvature in the sphere,} Bull. Inst. Math. Academia Sin. 28:4(2000)215-223.


\bibitem{HPR} A. Hurtado, V. Palmer and M. Ritor\'e, {\it Comparison results for capacity}, Indiana Univ. Math. J. 61(2012)539-555. 

\bibitem{Je96a} D. Jerison, {\it A Minkowski problem for electrostatic capacity}, Acta Math. {176}(1996)1-47.

\bibitem{Je96b} D. Jerison, {\it The direct method in the calculus of variations for convex bodies}, Adv. Math. {122}(1996)262-279.

\bibitem{Kub} T. Kubota, {\it \"Uber die konvex-geschlossenen Mannigfaltigkeiten im $n$-dimensionalen Raume}, Sci. Rep. Tojoku Univ. 14(1925)85-99.


\bibitem{Lewis} J. Lewis, {\it Capacitary functions in convex rings}, Arch. Rational Mech. Anal. 66(1977)201-224.

\bibitem{LewN8} J. Lewis and K. Nystr\"om, {\it Regularity and free boundary regularity for the $p$-Laplacian in  Lipschitz and $C^1$-domains}, Ann. Acad. Sci. Fenn. Math. 33(2008)523-548.

\bibitem{Lie} G. M. Lieberman, {\it Boundary regularity for solutions of degenerate elliptic equations},  Nonlinear Anal. {12}(1988)1203-1219.

\bibitem{LXZ} M. Ludwig, J. Xiao and G. Zhang, {\it Sharp convex Lorentz-Sobolev inequalities}, Math. Ann. 350(2011)169-197.

\bibitem{MagPP} F. Maggi, M. Ponsiglione and A. Pratelli, {\it Quantitative stability in the isodiametric inequality via the isoperimetric inequality}, Trans. Amer. Math. Soc. 366(2014)1141-1160.

\bibitem{Maz1} V. Maz'ya, {\it Conductor and capacitary inequalities for functions on topological spaces and their applications to Sobolev-type imbeddings,} J. Funct. Anal. 224(2005)408-430.

\bibitem{Maz} V. Maz'ya, {\it Sobolev Spaces} with applications to elliptic partial differential equations, Springer-Verlag Berlin Heidelberg, 2011.

\bibitem{MR} O. Mendez and W. Reichel, {\it Electrostatic characterization of spheres,} Forum Math. 12(2000)223-245.

\bibitem{M1} U. Massari, {\it Esistenza e regolarit\'a delle ipersuperfice di curvatura media assegnata in $R^n$,} 
Arch. Rational Mech. Anal. 55 (1974)357-382.

\bibitem{M2} U. Massari, {\it Frontiere orientate di curvatura media assegnata in $L_p$,} Rend. Sem. Mat. Univ. Padova 53 (1975)37-52. 

\bibitem{MouY} L. Mou and P. Yang, {\it Regularity for $n$-harmonic maps}, {J. Geom. Anal.} 6(1996)91-112.

\bibitem{O1} V. I. Oliker, {\it Hypersurfaces in $R^{n+1}$ with prescribed Gaussian curvature and related equations of Monge-Amp\`ere type}, Comm. Partial Diff. Equ. 9(1984)807-839.

\bibitem{O2} V. I. Oliker, {\it The problem of embedding $S^n$ into $R^{n+1}$ with prescribed Gauss curvature and its solution by variational methods}, Trans. Amer. Math. Soc. 295(1986)291-303.

\bibitem{Os} R. Osserman, {\it Bonnesen-stype isoperimetric inequalities}, Amer. Math. Monthly 86(1979)1-29.

\bibitem{Pol} G. P\'olya, {\it Estimating electrostatic capacity}, Amer. Math. Monthly 54(1947)201-206.

\bibitem{PS} G. P\'olya and G. Szeg\"o, {\it Inequalities for the capacity of a condenser}, Amer. J. Math. 67(1945)1-32.

\bibitem{Sa} J.R. Sangwine-Yager, {\it A Nonnesen-style inradius inequality in 3-space}, Pacific J. Math. 134(1988)173-178.

\bibitem{San} L. Santal\'o, {\it Integral Geometry and Geometric Probability}, Encyclopedia of Mathematics and Its Applications, Vol 1, Addison-Wesley, 1976.

\bibitem{Schn} R. Schneider, {\it Convex Bodies: The Brunn-Minkowski Theory}, Cambridge Univ. Press, 1993.

\bibitem{Tol1} P. Tolksdorf, {\it On the Dirichlet problem for quasilinear elliptic equations in domains with conical boundary points}, Comm. Part. Diff. Equa. 8(1983)773-817.

\bibitem{Tol} P. Tolksdorf, {\it Regularity for a more general class of quasilinear elliptic equations}, J. Diff. Equa. 51(1984)126-150.

\bibitem{TrW} A. E. Treibergs and W. Wei, {\it Embedded hyperspheres with prescribed mean curvature}, J. Differential Geom. 18(1983)513–521.

\bibitem{Tso0} K. S. Tso, {\it Deforming a hypersurface by its Gauss-Kronecker curvature}, Comm. Pure Appl. Math. 38(1985)867-882.

\bibitem{Tso1} K. S. Tso, {\it Convex hypersurfaces with prescribed Gauss-Kronecker curvature}, J. Differential Geom. 34(1991)389-410.

\bibitem{Tso2} K. S. Tso, {\it A direct method approach for the existence of convex hypersurfaces with prescribed Gauss-Kronecker curvature}, Math. Z. 209(1992)339-334.

\bibitem{UK} K. Uhlenbeck, {\it Regularity of a class of nonlinear elliptic systems}, Acta Math. 138(1970)219-240.

\bibitem{U} J. I. E. Urbas, {\it Regularity of generalized solutions of Monge-Amp\'ere equations}, Math. Z. 197(1988)365-393.

\bibitem{W} X.-J. Wang, {\it Existence of convex hypersurfaces with prescribed Gauss-Kronecker curvature}, Trans. Amer. Math. Soc. 348(1996)4501-4524

\bibitem{Xiao} J. Xiao, {P-capacity vs surface-area}, arXiv:1506.03827v2[math.DG]15June2015.

\bibitem{Xu} X. Xu, {\it Some results on functional capacity and their applications to $p$-Laplacian problems involving measure data}, Nonlinear Anal. TMA 27(1996)17-36.

\bibitem{Yau} S.-T. Yau, {\it Problem section}. In: S.-T. Yau (ed.) Seminar on Differential Geometry, 669-706, Ann. Math. Stud. 102, Princeton University Press, Princeton, N.J., 1982.

\bibitem{Yau1} S.-T. Yau, {\it A remark on the existence of sphere with prescribed mean curvature}, Asian J. Math. 1:2(1997)293-294.

\end{thebibliography}
\end{document}